\newcommand{\qb}[2]{{\left [{#1 \atop #2} \right]_q}}
\newlength{\standardunitlength}
\newcommand{\bea}{\begin{eqnarray}}
\newcommand{\ena}{\end{eqnarray}}
\newcommand{\beas}{\begin{eqnarray*}}
\newcommand{\enas}{\end{eqnarray*}}
\newcommand{\ignore}[1]{}
\newtheorem{prop}{Proposition}[section]
\newtheorem{theorem}[prop]{Theorem}
\begin{document}

\title [Hall-Littlewood polynomials and Cohen-Lenstra heuristics] {Hall-Littlewood polynomials and Cohen-Lenstra heuristics
for Jacobians of random graphs}

\author{Jason Fulman}
\address{Department of Mathematics\\
        University of Southern California\\
        Los Angeles, CA, 90089, USA}
\email{fulman@usc.edu}

\keywords{Hall-Littlewood polynomial, Cohen-Lenstra heuristic, random graph}

\date{March 2, 2014}

\thanks{This research was supported by NSA grant H98230-13-1-0219. We also thank
Melanie Matchett Wood for helpful correspondence.}

\begin{abstract} Cohen-Lenstra heuristics for Jacobians of random graphs give rise to random
partitions. We connect these random partitions to the Hall-Littlewood polynomials of symmetric
function theory, and use this connection to give combinatorial proofs of properties of these
random partitions. In addition, we use Markov chains to give an algorithm for generating these
partitions.
\end{abstract}

\maketitle

\section{Introduction} \label{intro}

To a graph $\Gamma$, one can associate an abelian group $S_{\Gamma}$, which has various names in the literature: the sandpile
group, the Jacobian, the critical group, the Picard group. The size of $S_{\Gamma}$ is the number of spanning trees of $\Gamma$.
Lorenzini \cite{L} asked about the distribution of group structures
of $S_{\Gamma}$, for random graphs $\Gamma$. The papers \cite{CLP} and \cite{CKLPW}, proposed Cohen-Lenstra heuristics for
the distribution of $S_{\Gamma}$.

Matchett Wood \cite{MW} gives very concrete results about the $p$-Sylow subgroup $S_{\Gamma,p}$ of $S_{\Gamma}$. This is a finite abelian $p$-group. To
describe her work, recall that finite abelian $p$-groups correspond to integer partitions $\lambda$;
the group $\oplus \mathbb{Z}/p^{\lambda_i} \mathbb{Z}$ corresponds to the partition with parts $\lambda_1 \geq \lambda_2 \geq \cdots$.
Let $\Gamma$ be an Erdos-Renyi random graph on $n$ vertices with independent edge probabilities $0 < q <1$. Theorem 1.1 of \cite{MW} shows that
as $n \rightarrow \infty$, the chance that $S_{\Gamma,p}$ is equal to an abelian $p$-group of type $\lambda$ converges to
\begin{equation} \label{woodform}  \frac{\prod_{i \ odd} (1-1/p^i)}{p^{\sum_i \mu_i(\mu_i+1)/2} \prod_{i=1}^{\lambda_1} \prod_{j=1}^{\lfloor \frac{\mu_i - \mu_{i+1}}{2} \rfloor}
(1-1/p^{2j})}. \end{equation} Here $\mu$ is the partition obtained by transposing the diagram of the partition $\lambda$. Note that equation \eqref{woodform} does not depend on the edge probability $q$ of the graph.

Although Matchett Wood's result implies that formula \eqref{woodform} must define a probability distribution on the set of all partitions of
all natural numbers, this is not at all obvious from looking at formula \eqref{woodform}. One of our contributions is to relate
formula \eqref{woodform} to Hall-Littlewood polynomials, thereby implying that \eqref{woodform} defines a probability distribution.
As a first step, note that \eqref{woodform} can be rewritten as:
\begin{equation} \label{wood2} \frac{ \prod_{i \ odd} (1-1/p^i) } {p^{n(\lambda)+|\lambda|} \prod_{i \geq 1} \prod_{j=1}^{\lfloor m_i(\lambda)/2 \rfloor}
(1-1/p^{2j})}. \end{equation} Here $|\lambda|$ denotes the size of $\lambda$, $n(\lambda) = \sum_{i \geq 1} (i-1) \lambda_i$, and $m_i(\lambda)$
is the number of parts of $\lambda$ of size $i$. Next, consider the following identity on page 232 of \cite{M}:
\begin{equation} \label{macid} \sum_{\lambda} d_{\lambda}(t)^{-1} Q_{\lambda}(x;t) = \prod_i \frac{1-tx_i}{1-x_i} \prod_{i<j} \frac{1-tx_ix_j}{1-x_ix_j}, \end{equation}
where $Q_{\lambda}(x;t)$ is a family of symmetric functions which are scalar multiples of Hall-Littlewood polynomials, and
\[ d_{\lambda}(t) = \prod_{i \geq 1} \prod_{j=1} ^{\lfloor m_i(\lambda)/2 \rfloor} (1 - t^{2j}).\] Setting $x_i=1/p^i$ for all $i$, and $t=1/p$, it follows from page 213 of \cite{M} that $Q_{\lambda}(x;t)$ becomes $\frac{1}{p^{n(\lambda)+|\lambda|}}$. Hence with these values of the $x_i$'s and $t$, the left hand side of \eqref{macid} becomes \[
\sum_{\lambda} \frac{1}{p^{n(\lambda)+|\lambda|} \prod_{i \geq 1} \prod_{j=1}^{\lfloor m_i(\lambda)/2 \rfloor} (1-1/p^{2j})}.\] The right hand side of \eqref{macid} becomes
\begin{eqnarray*}
 \frac{1}{(1-1/p)} \prod_{i<j} \frac{(1-1/p^{i+j+1})}{(1-1/p^{i+j})} & = & \frac{1}{1-1/p} \frac{\prod_{s \geq 4} (1-1/p^s)^{\lfloor (s-2)/2 \rfloor}}{\prod_{s \geq 3} (1-1/p^s)^{\lfloor (s-1)/2 \rfloor}} \\
 & = & \prod_{i \ odd} (1-1/p^i)^{-1},
 \end{eqnarray*} giving a combinatorial proof that formula \eqref{woodform} defines a probability measure.

One advantage to the formulation in terms of Hall-Littlewood polynomials is that it suggests a two parameter family of probability measures
generalizing \eqref{woodform}. Here we have in mind a parameter $u$ (satisfying $0<u<p$), and a natural number $r$. Then one can specialize
the identity \eqref{macid} by setting $x_1=u/p, x_2 =u/p^2, \cdots, x_r = u/p^r$, all other $x_i=0$, and $t=1/p$.
Section \ref{Hall} of this paper will study two further specializations (the first being $u$ arbitrary and $r \rightarrow \infty$, and the second
being $u=1$ and $r$ arbitrary). Aside from their intrinsic interest, these two specializations will be useful. The first specialization will be used
to find a formula (which we believe to be new) for the chance that a partition $\lambda$ chosen from the probability measure \eqref{woodform} has
a given size. The second specialization will be used to give a combinatorial proof of the fact (proved algebraically by Matchett Wood \cite{MW}), that the chance
that $\lambda$ chosen from \eqref{woodform} has exactly $a$ parts is equal to \begin{equation} \label{parts} \frac{\prod_{i=a+1}^{\infty} (1-1/p^i)}
{p^{a(a+1)/2} \prod_{i=1}^{\infty} (1-1/p^{2i})}. \end{equation}

In retrospect, it is perhaps not so surprising that Hall-Littlewood polynomials should be usefully connected to Cohen-Lenstra heuristics for Jacobians of random graphs.
Indeed, in the case of number fields, the random partitions arising from Cohen-Lenstra heuristics for class groups were studied from the viewpoint of Hall-Littlewood polynomials in \cite{F2}, and this was our main motivation for the current paper. Moreover in the case of elliptic curves, one can easily deduce from formulas of Delaunay \cite{D} that the random partitions arising from Cohen-Lenstra heuristics for Tate-Shafarevich groups are captured by the random partitions in \cite{F2}. We also mention a nice recent paper of Delaunay and Jouhet \cite{DJ} which uses combinatorics of Hall-Littlewood polynomials to relate a model of Poonen and Rains to Cohen-Lenstra heuristics for Tate-Shafarevich groups.

Section \ref{Markov} of this paper uses Markov chains to study the probability measure \eqref{woodform}. More precisely, we give an algorithm for generating
such random partitions, analogous to one in \cite{F1} for random partitions arising from Cohen-Lenstra number field heuristics. This algorithm should be useful for
studying properties of the random partitions of \eqref{woodform} by computer simulations. Moreover, as a consequence of our methods, we get completely elementary proofs (using nothing about Hall-Littlewood polynomials) both for the fact that \eqref{woodform} defines a probability distribution, and for the fact that \eqref{parts} gives the chance
that $\lambda$ chosen from \eqref{woodform} has exactly $a$ parts.

\section{Hall-Littlewood polynomials} \label{Hall}

Hall-Littlewood polynomials appear in many places in algebraic combinatorics, for example in the representation theory of $GL(n,q)$ and as spherical functions
for $GL(n)$ over the $p$-adic numbers. All needed background on Hall-Littlewood polynomials can be found in Chapter 3 of the text \cite{M}. We also highly
recommend the survey paper \cite{NR}. For connections of Hall-Littlewood polynomials to random partitions (different than the random partitions studied here),
see \cite{F2}.

\subsection{First specialization} \label{First}

This subsection considers the specialization of identity \eqref{macid} with $x_i=u/p^i$ for all $i$, and $t=1/p$. The left hand side of
\eqref{macid} becomes
\[ \sum_{\lambda} \frac{u^{|\lambda|}} {p^{n(\lambda)+|\lambda|} \prod_{i \geq 1} \prod_{j=1}^{\lfloor m_i(\lambda)/2 \rfloor}
(1-1/p^{2j})}, \] and the right hand side becomes \begin{eqnarray*}
 \frac{1}{(1-u/p)} \prod_{i<j} \frac{(1-u^2/p^{i+j+1})}{(1-u^2/p^{i+j})} & = & \frac{1}{1-u/p} \frac{\prod_{s \geq 4} (1-u^2/p^s)^{\lfloor (s-2)/2 \rfloor}}{\prod_{s \geq 3} (1-u^2/p^s)^{\lfloor (s-1)/2 \rfloor}} \\
 & = & (1-u/p)^{-1} \prod_{i \geq 3 \atop i \ odd} (1-u^2/p^i)^{-1}.
 \end{eqnarray*} Thus we obtain the identity
\begin{equation} \label{size}
\sum_{\lambda} \frac{u^{|\lambda|}} {p^{n(\lambda)+|\lambda|} \prod_{i \geq 1} \prod_{j=1}^{\lfloor m_i(\lambda)/2 \rfloor}
(1-1/p^{2j})} = (1-u/p)^{-1} \prod_{i \geq 3 \atop i \ odd} (1-u^2/p^i)^{-1},
\end{equation} which implies that
\begin{equation} \label{sizemeas}
(1-u/p) \prod_{i \geq 3 \atop i \ odd} (1-u^2/p^i) \frac{u^{|\lambda|}} {p^{n(\lambda)+|\lambda|} \prod_{i \geq 1} \prod_{j=1}^{\lfloor m_i(\lambda)/2 \rfloor}
(1-1/p^{2j})} \end{equation}
defines a probability measure on the set of all partitions of all natural numbers.

As a corollary of this observation, we deduce the following result, which we believe to be new, and perhaps not so straightforward to
prove by other methods.

\begin{theorem} \label{sizen} If $\lambda$ is chosen from the probability measure \eqref{woodform}, then the chance that the size of $\lambda$ is equal
 to $n$ is \[ \frac{ \prod_{i \ odd} (1-1/p^i)}{p^n} \sum_{j=0 \atop j \ even}^n \frac{1}{p^{j/2} (1-1/p^2) (1-1/p^4) \cdots (1-1/p^j)} .\]
\end{theorem}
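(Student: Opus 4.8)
The plan is to extract the coefficient of $u^n$ from the generating-function identity \eqref{size}, after renormalizing it into the probability statement \eqref{sizemeas}. Observe first that \eqref{sizemeas} already tells us that if we weight each partition $\lambda$ by $u^{|\lambda|}$ times the (unnormalized) Wood weight, we get $(1-u/p)^{-1}\prod_{i\ge 3,\ i\ \mathrm{odd}}(1-u^2/p^i)^{-1}$ up to the constant $\prod_{i\ \mathrm{odd}}(1-1/p^i)$ coming from comparing \eqref{woodform} and \eqref{wood2} (the $u=1$ specialization recovers exactly the normalizing constant of \eqref{woodform}). Concretely, the probability that $|\lambda|=n$ under \eqref{woodform} equals
\[
\Big(\prod_{i\ \mathrm{odd}}(1-1/p^i)\Big)\cdot [u^n]\ (1-u/p)^{-1}\prod_{i\ge 3,\ i\ \mathrm{odd}}(1-u^2/p^i)^{-1},
\]
so everything reduces to computing that coefficient.

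Next I would compute the coefficient by splitting the right-hand product. The factor $(1-u/p)^{-1}=\sum_{k\ge 0}u^k/p^k$ is geometric. For the remaining factor $\prod_{i\ge 3,\ i\ \mathrm{odd}}(1-u^2/p^i)^{-1}$, set $v=u^2$ and re-index the odd integers $i=3,5,7,\dots$ as $i=2\ell+1$ with $\ell\ge 1$; then this is $\prod_{\ell\ge 1}(1-v\,p^{-(2\ell+1)})^{-1}=\prod_{\ell\ge 1}(1-(v/p)p^{-2\ell})^{-1}$. By the classical $q$-exponential / partition generating function identity $\prod_{\ell\ge 1}(1-z q^{\ell})^{-1}=\sum_{m\ge 0}\frac{z^m q^{m}}{(1-q)(1-q^2)\cdots(1-q^m)}$ (Euler), applied with $q=1/p^2$ and $z=v/p = u^2/p$, this product equals
\[
\sum_{m\ge 0}\frac{(u^2/p)^m\,p^{-2m}}{(1-1/p^2)(1-1/p^4)\cdots(1-1/p^{2m})}
=\sum_{m\ge 0}\frac{u^{2m}\,p^{-3m}}{(1-1/p^2)\cdots(1-1/p^{2m})}.
\]
Wait — I should double-check the power of $p$ here: the $\ell$-th factor is $(1-(u^2/p)p^{-2\ell})^{-1}$, so with $q=p^{-2}$ and $z=u^2/p$ the coefficient of $z^m$ carries $q^{1+2+\cdots+m}=q^{m(m+1)/2}=p^{-m(m+1)}$, giving $u^{2m}p^{-m}p^{-m(m+1)}=u^{2m}p^{-m(m+2)}$ over $(1-1/p^2)\cdots(1-1/p^{2m})$. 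So the clean statement is that the second factor equals $\sum_{m\ge 0}\frac{u^{2m}}{p^{m(m+2)}(1-1/p^2)\cdots(1-1/p^{2m})}$; the exact exponent will be pinned down by a short direct check of the first few terms.

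Then I would multiply the two series and read off $[u^n]$: writing $n=k+2m$ with $k\ge 0$, $m\ge 0$, the contribution of each pair is $p^{-k}$ from the geometric factor times the $m$-th term above, i.e.\ $p^{-(n-2m)}\cdot p^{-m(m+2)}/\big((1-1/p^2)\cdots(1-1/p^{2m})\big) = p^{-n}\cdot p^{2m - m^2 - 2m}/(\cdots) = p^{-n}\,p^{-m^2}/\big((1-1/p^2)\cdots(1-1/p^{2m})\big)$. Reparametrizing by $j=2m$ (so $j$ runs over even integers $0\le j\le n$, with $p^{-m^2}=p^{-j^2/4}$)... this does not literally match the stated $p^{-j/2}$, so the remaining work is to reconcile the bookkeeping: I expect the correct normalization of $Q_\lambda$ contributes an extra $u$-independent power, or the Euler identity exponent combines with the $p^{n(\lambda)+|\lambda|}$ in \eqref{wood2} differently than in my quick pass. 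The \textbf{main obstacle} is precisely this exponent-accounting: getting the power of $p$ attached to each term to collapse to exactly $p^{n}\cdot p^{j/2}$ as in the theorem, and confirming the summation range is $j$ even, $0\le j\le n$. Once the generating-function manipulation is set up as above, that reconciliation is a finite, mechanical check (compare, say, $n=0,1,2,3$ against direct evaluation of \eqref{woodform}), after which the identity follows by equating coefficients of $u^n$.
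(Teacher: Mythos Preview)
Your approach is exactly the paper's: reduce to computing $[u^n]$ of $(1-u/p)^{-1}\prod_{i\ge 3,\ i\ \mathrm{odd}}(1-u^2/p^i)^{-1}$, split off the geometric factor, expand the remaining product via Euler's identity, and convolve. The paper does this in five lines.

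Your ``main obstacle'' is self-inflicted. Your \emph{first} expansion was already correct: with $q=1/p^{2}$ and $z=u^{2}/p$ one has
\[
\prod_{\ell\ge 1}(1-zq^{\ell})^{-1}=\sum_{m\ge 0}\frac{z^{m}q^{m}}{(1-q)(1-q^{2})\cdots(1-q^{m})},
\]
so the coefficient of $u^{2m}$ is $p^{-3m}\big/\bigl((1-1/p^{2})\cdots(1-1/p^{2m})\bigr)$, exactly what you wrote before the ``Wait''. Your revision replaces $q^{m}$ by $q^{m(m+1)/2}$, but that exponent belongs to the identity $\prod_{\ell\ge 1}(1+zq^{\ell})=\sum_{m}z^{m}q^{m(m+1)/2}/(q;q)_{m}$ for partitions into \emph{distinct} parts, not to the reciprocal product. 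A one-term check settles it: $[z^{2}]\prod_{\ell\ge 1}(1-zq^{\ell})^{-1}=q^{2}+q^{3}+2q^{4}+\cdots=q^{2}/\bigl((1-q)(1-q^{2})\bigr)$, not $q^{3}/\bigl((1-q)(1-q^{2})\bigr)$.

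With the correct exponent $p^{-3m}$ the convolution is immediate:
\[
[u^{n}]=\sum_{m:\,2m\le n} p^{-(n-2m)}\cdot\frac{p^{-3m}}{(1-1/p^{2})\cdots(1-1/p^{2m})}
=\frac{1}{p^{n}}\sum_{m:\,2m\le n}\frac{p^{-m}}{(1-1/p^{2})\cdots(1-1/p^{2m})},
\]
and the substitution $j=2m$ (so $p^{-m}=p^{-j/2}$ and the denominator ends at $1-1/p^{j}$) gives precisely the stated sum over even $j$ with $0\le j\le n$. There is nothing further to reconcile; no hidden contribution from $Q_{\lambda}$ or from $p^{n(\lambda)+|\lambda|}$ is missing.
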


\begin{proof} The chance that the size of $\lambda$ is $n$ is equal to $\prod_{i \ odd} (1-1/p^i)$ multiplied by the coefficient of $u^n$ in
\[ \sum_{\lambda} \frac{u^{|\lambda|}} {p^{n(\lambda)+|\lambda|} \prod_{i \geq 1} \prod_{j=1}^{\lfloor m_i(\lambda)/2 \rfloor}
(1-1/p^{2j})}. \] By \eqref{size}, this is $\prod_{i \ odd} (1-1/p^i)$ multiplied by the coefficient of $u^n$ in \[ (1-u/p)^{-1} \prod_{i \geq 3 \atop i \ odd}
(1-u^2/p^i)^{-1} .\] Let $[u^s] f(u)$ denote the coefficient of $u^s$ in a power series $f(u)$. Then
\begin{eqnarray*}
& & [u^n] (1-u/p)^{-1} \prod_{i \geq 3 \atop i \ odd} (1-u^2/p^i)^{-1} \\
& = & \sum_{j=0}^n [u^{n-j}] (1-u/p)^{-1} \cdot [u^j] \prod_{i \geq 3 \atop i \ odd} (1-u^2/p^i)^{-1} \\
& = & \sum_{j=0}^n \frac{1}{p^{n-j}} [u^j] \prod_{i \geq 3 \atop i \ odd} (1-u^2/p^i)^{-1} \\
& = & \sum_{j=0 \atop j \ even}^n \frac{1}{p^{n-j}} [u^{j/2}] \prod_{i \geq 3 \atop i \ odd} (1-u/p^i)^{-1} \\
& = & \sum_{j=0 \atop j \ even}^n \frac{1}{p^{n-j}} \frac{1}{p^{3j/2} (1-1/p^2)(1-1/p^4) \cdots (1-1/p^j)} \\
& = &   \frac{1}{p^n} \sum_{j=0 \atop j \ even}^n \frac{1}{p^{j/2} (1-1/p^2) (1-1/p^4) \cdots (1-1/p^j)}.
\end{eqnarray*} Note that the fourth equality used the identity
\[  1 + \sum_{m \geq 1} \frac{s^m}{(1-q) \cdots (1-q^m)} = \prod_{m \geq 0} (1-s q^m)^{-1} \] (with $s=u/p^3$, $q=1/p^2$)
from page 19 of \cite{A}. This completes the proof. \end{proof}

\subsection{Second specialization} \label{Second}

Throughout this subsection we let $P(a)$ denote the probability that a partition chosen from \eqref{woodform} has exactly $a$ parts. As mentioned earlier,
an explicit formula for $P(a)$ is given by \eqref{parts}, and this subsection will give a combinatorial proof of this result. Matchett Wood's proof was algebraic, and
required a result of Macwilliams which enumerated the symmetric $n \times n$ matrices over a finite field according to their rank.

Consider the specialization of \eqref{macid} with $x_1=1/p, x_2=1/p^2, \cdots, x_r=1/p^r$, all other $x_i=0$, and $t=1/p$. From page 213 of \cite{M}, letting $l(\lambda)$ denote the number of parts of $\lambda$, it follows that the left hand side of \eqref{macid} becomes
\begin{equation} \label{left} \sum_{\lambda \atop l(\lambda) \leq r} \frac{1}{p^{n(\lambda)+|\lambda|} \prod_{i \geq 1} \prod_{j=1}^{\lfloor m_i(\lambda)/2 \rfloor} (1-1/p^{2j})}
    \cdot \frac{(1-1/p) \cdots (1-1/p^r)}{(1-1/p) \cdots (1-1/p^{r-l(\lambda)})} .\end{equation} One proves by induction on $r$ that the right hand side of \eqref{macid} specializes to
\[ (1+1/p)(1+1/p^2) \cdots (1+1/p^r).\] Summarizing, it follows that
\begin{equation} \label{partsmeas} \frac{(1+1/p)^{-1} \cdots (1+1/p^r)^{-1}}{p^{n(\lambda)+|\lambda|} \prod_{i \geq 1} \prod_{j=1}^{\lfloor m_i(\lambda)/2 \rfloor} (1-1/p^{2j})} \cdot \frac{(1-1/p) \cdots (1-1/p^r)}{(1-1/p) \cdots (1-1/p^{r-l(\lambda)})} \end{equation} defines a probability measure on the set of all
   partitions of all natural numbers with at most $r$ parts.

Now we prove the following theorem of Matchett Wood \cite{MW}.

\begin{theorem}
\[ P(a) = \frac{\prod_{i=a+1}^{\infty} (1-1/p^i)} {p^{a(a+1)/2} \prod_{i=1}^{\infty} (1-1/p^{2i})} .\]
\end{theorem}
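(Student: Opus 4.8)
The plan is to read off the number-of-parts statistic from the specialization identity ``\eqref{left}\,$= (1+1/p)(1+1/p^2)\cdots(1+1/p^r)$'' (valid for every $r \geq 0$, since \eqref{macid} specializes this way on both sides), solve the resulting triangular system of relations, and substitute back into \eqref{woodform}. Write $q = 1/p$ and $(q;q)_m = (1-q)(1-q^2)\cdots(1-q^m)$, and for each integer $a \geq 0$ set
\[ S_a := \sum_{\lambda :\, l(\lambda) = a} \frac{1}{p^{n(\lambda)+|\lambda|} \prod_{i \geq 1} \prod_{j=1}^{\lfloor m_i(\lambda)/2 \rfloor} (1-1/p^{2j})}, \]
a convergent subsum of the series $\sum_{\lambda}$ occurring in \eqref{woodform} (convergent because \eqref{woodform} is a probability measure) which does not depend on $r$. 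By \eqref{wood2} the quantity to be computed is $P(a) = \prod_{i \ odd}(1-1/p^i)\cdot S_a$, so it suffices to prove $S_a = q^{a(a+1)/2}/(q;q)_a$; the stated closed form for $P(a)$ then follows from the elementary rewritings $\prod_{i \ odd}(1-1/p^i) = \prod_{i \geq 1}(1-q^i)\big/\prod_{i \geq 1}(1-q^{2i})$ and $1/(q;q)_a = \prod_{i=a+1}^\infty(1-q^i)\big/\prod_{i \geq 1}(1-q^i)$.

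First I would group the summands of the left-hand side \eqref{left} according to the value $a = l(\lambda)$, for $a = 0,1,\ldots,r$; this rearrangement is legitimate because the terms are positive and the series converges. Since the weight $\dfrac{(1-1/p)\cdots(1-1/p^r)}{(1-1/p)\cdots(1-1/p^{r - l(\lambda)})} = \prod_{k = r - a + 1}^{r} (1 - q^k)$ depends on $\lambda$ only through $a = l(\lambda)$, this yields, for every $r \geq 0$, the relation
\[ \sum_{a = 0}^{r} \left( \prod_{k = r-a+1}^{r} (1 - q^k) \right) S_a = \prod_{i=1}^{r} (1 + q^i). \]
Dividing by $(q;q)_r$ and using $\prod_{k=r-a+1}^r(1-q^k) = (q;q)_r/(q;q)_{r-a}$ and $\prod_{i=1}^r(1+q^i) = (q^2;q^2)_r/(q;q)_r$ puts this in the convolution form $\sum_{a=0}^{r} S_a/(q;q)_{r-a} = (q^2;q^2)_r/(q;q)_r^2$.

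The key step is to check that the candidate values $S_a = q^{a(a+1)/2}/(q;q)_a$ satisfy all of these relations simultaneously. Substituting them and multiplying by $(q;q)_r$ reduces the $r$-th relation to
\[ \sum_{a=0}^{r} q^{a(a+1)/2} \qb{r}{a} = \prod_{i=1}^{r} (1 + q^i), \]
which is the terminating $q$-binomial theorem $\prod_{k=0}^{r-1}(1 + x q^k) = \sum_{a=0}^{r} q^{a(a-1)/2}\qb{r}{a} x^a$ (see \cite{A}), evaluated at $x = q$. To conclude, note that the $r$-th relation expresses $S_r$ in terms of $S_0,\ldots,S_{r-1}$ with the coefficient of $S_r$ equal to $1/(q;q)_0 = 1$; hence the system is lower-triangular and, from $S_0 = 1$, determines the $S_a$ uniquely. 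So the actual $S_a$ coincide with the candidates, and the formula for $P(a)$ follows.

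I do not anticipate a genuine obstacle: the one substantive input is the classical $q$-series identity above, and the rest is bookkeeping. The points needing mild care are justifying the rearrangement of the convergent positive series and verifying that the $q$-dependent factor in \eqref{left} really is constant on each set $\{\lambda : l(\lambda) = a\}$ — equivalently, that \eqref{partsmeas} factors over partitions with exactly $a$ parts as a constant times the intrinsic weight defining $S_a$.
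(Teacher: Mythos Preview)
Your proposal is correct and follows essentially the same route as the paper's proof: both group the specialized identity \eqref{left} by the number of parts to obtain a lower-triangular system, verify the candidate solution via the terminating $q$-binomial theorem (the paper cites it as $\sum_{s=0}^r \qb{r}{s} q^{s(s+1)/2} x^s = (1+xq)\cdots(1+xq^r)$ at $x=1$, which is your identity at $x=q$), and conclude by uniqueness. The only cosmetic difference is that you work with the unnormalized sums $S_a$ and convert to $P(a)$ at the end, whereas the paper carries the normalizing factor $\prod_{i\ \mathrm{odd}}(1-1/p^i)$ throughout.
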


\begin{proof} Note that \eqref{left} can be rewritten as
\begin{eqnarray*}
&  & (1-1/p) \cdots (1-1/p^r) \sum_{s=0}^r \frac{1}{(1-1/p) \cdots (1-1/p^{r-s})} \\
& & \cdot \sum_{\lambda: l(\lambda)=s}
\frac{1}{p^{n(\lambda)+|\lambda|} \prod_{i \geq 1} \prod_{j=1}^{\lfloor m_i(\lambda)/2 \rfloor} (1-1/p^{2j})} \\
& = & (1-1/p) \cdots (1-1/p^r) \sum_{s=0}^r \frac{1}{(1-1/p) \cdots (1-1/p^{r-s})} \frac{P(s)}{\prod_{i \ odd} (1-1/p^i)}
\end{eqnarray*} This gives the equation
\begin{eqnarray*}
& & (1-1/p) \cdots (1-1/p^r) \sum_{s=0}^r \frac{1}{(1-1/p) \cdots (1-1/p^{r-s})} \frac{P(s)}{\prod_{i \ odd} (1-1/p^i)} \\
& = & (1+1/p)(1+1/p^2) \cdots (1+1/p^r), \end{eqnarray*} which starting with $P(0)= \prod_{i \ odd} (1-1/p^i)$, one can use to recursively solve
for $P(1)$, then $P(2)$, and so on.

Since this recursion has a unique solution, it is enough to show that \[ P(a) = \frac{\prod_{i=a+1}^{\infty} (1-1/p^i)} {p^{a(a+1)/2} \prod_{i=1}^{\infty} (1-1/p^{2i})} \]
does satisfy the recursion. And indeed,
\begin{eqnarray*}
& & (1-1/p) \cdots (1-1/p^r) \\
& & \cdot \sum_{s=0}^r \frac{1}{p^{{s+1 \choose 2}} (1-1/p) \cdots (1-1/p^s) (1-1/p) \cdots (1-1/p^{r-s})} \\
& = & (1+1/p)(1+1/p^2) \cdots (1+1/p^r). \end{eqnarray*} Here the final equality used the $q$-binomial formula (page 78 of \cite{B})
\[ \sum_{s=0}^r \qb{r}{s} q^{s(s+1)/2} x^s = (1+xq)(1+xq^2) \cdots (1+xq^r) \] with the substitutions $x=1, q=1/p$.
This completes the proof. \end{proof}

\section{Markov chains} \label{Markov}

This section describes the probability measure \eqref{woodform} in terms of Markov chains. Two proofs of our main result (Theorem \ref{main}) will be given. The first proof is given in the interest
of clarity, and assumes that \eqref{woodform} is a probability measure and that a formula for $P(a)$ is given by \eqref{parts}. The second proof is more elementary,
and as a byproduct yields both another proof that \eqref{woodform} is a probability measure and another proof of the formula \eqref{parts} for $P(a)$.

It is convenient to set $\lambda_0'$ (the height of an imaginary zeroth column) equal to $\infty$. We let $Prob(E)$ denote the probability of an event
$E$ under the measure \eqref{woodform}.

\begin{theorem} \label{main} Starting with $\lambda_0'= \infty$, define in succession $\lambda_1',\lambda_2',\cdots$ according to the rule that if
$\lambda_i'=a$, then $\lambda_{i+1}'=b$ with probability
\[ K(a,b) = \frac{\prod_{i=1}^a (1-1/p^i)}{p^{{b+1 \choose 2}} \prod_{i=1}^b (1-1/p^i) \prod_{j=1}^{\lfloor (a-b)/2 \rfloor} (1-1/p^{2j}) }. \] Then
the resulting partition is distributed according to \eqref{woodform}.
\end{theorem}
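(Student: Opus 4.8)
The plan is to pass to the conjugate partition, where the target measure \eqref{woodform} nearly factors over columns, and to recognise the path distribution of the Markov chain as a telescoping product that collapses to \eqref{woodform}.

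First I would rewrite \eqref{woodform} via its form \eqref{wood2}. Using the standard identities $|\lambda| = \sum_i \lambda_i'$ and $n(\lambda) = \sum_i \binom{\lambda_i'}{2}$ (so that $n(\lambda)+|\lambda| = \sum_i \binom{\lambda_i'+1}{2}$), the fact that $m_i(\lambda) = \lambda_i' - \lambda_{i+1}'$, and the splitting $\prod_{k \geq 1}(1-1/p^k) = \prod_{k \geq 1}(1-1/p^{2k}) \cdot \prod_{k \ odd}(1-1/p^k)$, the probability that \eqref{wood2} assigns to $\lambda$ becomes
\[ \frac{\prod_{k \geq 1}(1-1/p^k)}{\prod_{k \geq 1}(1-1/p^{2k})} \cdot \prod_{i \geq 1} \frac{1}{p^{\binom{\lambda_i'+1}{2}} \prod_{j=1}^{\lfloor (\lambda_i' - \lambda_{i+1}')/2 \rfloor}(1-1/p^{2j})}. \]
I would also note the elementary facts $K(0,0)=1$ (so $0$ is absorbing), $K(a,b)=0$ for $b>a$ (so the $\lambda_i'$ weakly decrease), and $K(a,0) = \prod_{k \leq a, \ odd}(1-1/p^k) \geq \prod_{k \ odd}(1-1/p^k) > 0$ (so the chain reaches $0$ in geometrically bounded time). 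Thus, started from $\lambda_0'=\infty$, the chain almost surely produces a genuine partition $\lambda$, recorded by its column heights $\lambda_1' \geq \lambda_2' \geq \cdots$, and it produces a prescribed $\lambda$ with probability $\prod_{i \geq 1} K(\lambda_{i-1}', \lambda_i')$, where $\lambda_0' = \infty$.

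The heart of the matter is to telescope this product. The numerator $\prod_{k=1}^{\lambda_{i-1}'}(1-1/p^k)$ of $K(\lambda_{i-1}',\lambda_i')$ cancels the like factor in the denominator of $K(\lambda_{i-2}',\lambda_{i-1}')$; propagating this down the sequence leaves $\prod_{k=1}^{\infty}(1-1/p^k)$ (from the $i=1$ numerator, since $\lambda_0'=\infty$) and $1$ in the limit (since $\lambda_i'\to0$). The powers of $p$ multiply to $p^{-\sum_i \binom{\lambda_i'+1}{2}} = p^{-(n(\lambda)+|\lambda|)}$. The factors $\prod_{j=1}^{\lfloor(\lambda_{i-1}'-\lambda_i')/2\rfloor}(1-1/p^{2j})$ contribute $\prod_{j \geq 1}(1-1/p^{2j})$ (from $i=1$, again since $\lambda_0'=\infty$) together with $\prod_{i \geq 1}\prod_{j=1}^{\lfloor m_i(\lambda)/2 \rfloor}(1-1/p^{2j})$ (from the remaining $i$, after reindexing $i \mapsto i+1$). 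Assembling these pieces, $\prod_{i \geq 1}K(\lambda_{i-1}',\lambda_i')$ is exactly the displayed form of \eqref{woodform} above, which is the conclusion of the theorem.

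What this leaves open — and what I expect to be the real obstacle — is verifying that $K(a,\cdot)$ is a probability distribution, i.e.\ $\sum_{b=0}^{a}K(a,b)=1$ for each $a \in \{0,1,2,\dots\}\cup\{\infty\}$, so that the rule in the theorem genuinely defines a Markov chain. After clearing the common factor $\prod_{k=1}^{a}(1-1/p^k)$, this is the terminating identity
\[ \sum_{b=0}^{a} \frac{1}{p^{\binom{b+1}{2}} \prod_{k=1}^{b}(1-1/p^k) \prod_{j=1}^{\lfloor(a-b)/2\rfloor}(1-1/p^{2j})} = \frac{1}{\prod_{k=1}^{a}(1-1/p^k)}. \]
For $a=\infty$ this is, after the same parity splitting (which identifies $K(\infty,b)$ with $P(b)$), precisely the assertion $\sum_{b \geq 0}P(b)=1$ for $P$ as in \eqref{parts}, which we are permitted to assume. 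For finite $a$ it is a routine-looking terminating $q$-series identity, which I would establish by induction on $a$ (pairing consecutive terms according to the parity of $a-b$) or deduce from the $q$-binomial theorem $\sum_{s=0}^{r}\qb{r}{s}q^{s(s+1)/2}x^s=\prod_{k=1}^{r}(1+xq^k)$ already used in Section \ref{Second}. Granting this normalisation, the telescoping computation above shows the chain's output is distributed as \eqref{woodform}, completing the proof.
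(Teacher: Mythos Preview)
Your argument is correct and takes a genuinely different route from the paper's. You compute the path probability $\prod_{i\ge 1}K(\lambda_{i-1}',\lambda_i')$ directly and telescope it down to \eqref{wood2}; the paper instead starts from the measure \eqref{woodform}, evaluates the partial sums $\sum_{\lambda:\lambda_1'=r_1,\dots,\lambda_i'=a}Prob(\lambda)$ (which factor as a prefix term times $P(a)$), and then reads off the conditional probabilities, finding them equal to $K(a,b)$. The practical difference is exactly the point you flag as the ``real obstacle'': in the paper's approach the normalisation $\sum_{b\le a}K(a,b)=1$ is automatic, since a conditional probability summed over all outcomes is $1$, whereas you must establish the terminating identity $\sum_{b=0}^{a}\bigl(p^{\binom{b+1}{2}}(1/p;1/p)_b\,(1/p^2;1/p^2)_{\lfloor(a-b)/2\rfloor}\bigr)^{-1}=(1/p;1/p)_a^{-1}$ separately. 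Your telescoping step is cleaner and more transparent than the paper's partial-sum computation, but you pay for it by having to prove this identity by hand; the paper gets it for free and in fact (in its second proof) turns the logic around, using the automatic normalisation to \emph{derive} the formula \eqref{parts} for $P(a)$ rather than assuming it.
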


\begin{proof} (First proof) If $\lambda$ is chosen from \eqref{woodform}, the chance that $\lambda_i'=r_i$ for all $i$ is
\begin{eqnarray*}
& & Prob(\lambda_0'=\infty) \frac{Prob(\lambda_0'=\infty, \lambda_1'=r_1)}{Prob(\lambda_0'=\infty)} \\
 & & \cdot \prod_{i=1}^{\infty} \frac{Prob(\lambda_0'=\infty,\lambda_1'=r_1,\cdots,\lambda_{i+1}'=r_{i+1})}{Prob(\lambda_0'=\infty,\lambda_1'=r_1,\cdots,\lambda_{i}'=r_{i})}.
 \end{eqnarray*}
Thus it is enough to prove that \begin{eqnarray*}
 & & \frac{Prob(\lambda_0'=\infty,\lambda_1'=r_1,\cdots,\lambda_{i-1}'=r_{i-1},\lambda_i'=a,\lambda_{i+1}'=b)} {Prob(\lambda_0'=\infty,\lambda_1'=r_1,\cdots,\lambda_{i-1}'=r_{i-1},\lambda_i'=a)} \\
 & = & \frac{\prod_{i=1}^a (1-1/p^i)}{p^{{b+1 \choose 2}} \prod_{i=1}^b (1-1/p^i) \prod_{j=1}^{\lfloor (a-b)/2 \rfloor} (1-1/p^{2j}) },
 \end{eqnarray*} for all $i,a,b,r_1,\cdots,r_{i-1} \geq 0$.

For the case $i=0$, one needs to check that \[ P(b) = \frac{\prod_{i \ odd} (1-1/p^i)}{p^{{b+1 \choose 2}} \prod_{i=1}^b (1-1/p^i)},\] and this follows from
\eqref{parts}. For $i>0$, one calculates that
\begin{eqnarray*}
& & \sum_{\lambda:\lambda_1'=r_1, \cdots \lambda_{i-1}'=r_{i-1} \atop \lambda_i'=a} Prob(\lambda) \\
& = & \frac{P(a)}{p^{{r_1+1 \choose 2}+\cdots+{r_{i-1}+1 \choose 2}}} \\
& & \cdot \frac{1}{(1/p^2)_{\lfloor (r_1-r_2)/2 \rfloor} \cdots (1/p^2)_{\lfloor (r_{i-2}-r_{i-1})/2 \rfloor} (1/p^2)_{\lfloor (r_{i-1}-a)/2 \rfloor}}, \end{eqnarray*}
where $(1/p^2)_j$ denotes the product $(1-1/p^2)(1-1/p^4) \cdots (1-1/p^{2j})$. Similarly, observe that

\begin{eqnarray*} & &  \sum_{\lambda:\lambda_1'=r_1, \cdots \lambda_{i-1}'=r_{i-1} \atop \lambda_i'=a,\lambda_{i+1}'=b} Prob(\lambda) \\
& = & \frac{P(b)}{p^{{r_1+1 \choose 2}+\cdots+{r_{i-1}+1 \choose 2}+{a+1 \choose 2}}} \\
& & \cdot \frac{1}{(1/p^2)_{\lfloor (r_1-r_2)/2 \rfloor} \cdots (1/p^2)_{\lfloor (r_{i-1}-a)/2 \rfloor }(1/p^2)_{\lfloor (a-b)/2 \rfloor }}.
\end{eqnarray*}

Thus the ratio of these two expressions is \[ \frac{P(b)}{p^{{a+1 \choose 2}} P(a) (1/p^2)_{\lfloor (a-b)/2 \rfloor}} = \frac{\prod_{i=1}^a (1-1/p^i)}{p^{{b+1 \choose 2}} \prod_{i=1}^b (1-1/p^i) \prod_{j=1}^{\lfloor (a-b)/2 \rfloor} (1-1/p^{2j})}, \] as desired. Note that the transition probabilities automatically sum to 1 because
\[ \sum_{b \leq a} \frac{\sum_{\lambda:\lambda_1'=r_1, \cdots \lambda_{i-1}'=r_{i-1} \atop \lambda_i'=a,\lambda_{i+1}'=b} Prob(\lambda)}{\sum_{\lambda:\lambda_1'=r_1, \cdots \lambda_{i-1}'=r_{i-1} \atop \lambda_i'=a} Prob(\lambda)} = 1 \] for any measure on partitions.
\end{proof}

\begin{proof} (Second proof) For this proof one only needs that \eqref{woodform} is a measure; it will emerge that it is a probability measure. We let
 $P(a)$ denote the mass that the measure \eqref{woodform} assigns to the event that $\lambda$ has $a$ parts, and do not assume a formula for $P(a)$. We also let
 $Prob(\lambda)$ denote the mass that the measure \eqref{woodform} assigns to $\lambda$.

 As in the first proof, one calculates that
\begin{eqnarray*}
& & \sum_{\lambda:\lambda_1'=r_1, \cdots \lambda_{i-1}'=r_{i-1} \atop \lambda_i'=a} Prob(\lambda) \\
& = & \frac{P(a)}{p^{{r_1+1 \choose 2}+\cdots+{r_{i-1}+1 \choose 2}}} \\
& & \cdot \frac{1}{(1/p^2)_{\lfloor (r_1-r_2)/2 \rfloor} \cdots (1/p^2)_{\lfloor (r_{i-2}-r_{i-1})/2 \rfloor} (1/p^2)_{\lfloor (r_{i-1}-a)/2 \rfloor}},
\end{eqnarray*} where
$(1/p^2)_j$ denotes the product $(1-1/p^2)(1-1/p^4) \cdots (1-1/p^{2j})$, and that
\begin{eqnarray*}
& & \sum_{\lambda:\lambda_1'=r_1, \cdots \lambda_{i-1}'=r_{i-1} \atop \lambda_i'=a,\lambda_{i+1}'=b} Prob(\lambda) \\
& & = \frac{P(b)}{p^{{r_1+1 \choose 2}+\cdots+{r_{i-1}+1 \choose 2}+{a+1 \choose 2}}} \\
& & \cdot \frac{1}{(1/p^2)_{\lfloor (r_1-r_2)/2 \rfloor} \cdots (1/p^2)_{\lfloor (r_{i-1}-a)/2 \rfloor }(1/p^2)_{\lfloor (a-b)/2 \rfloor }}. \end{eqnarray*}

The ratio of these two expressions is \[ \frac{P(b)}{p^{{a+1 \choose 2}} P(a) (1/p^2)_{\lfloor (a-b)/2 \rfloor}} .\] Since \eqref{woodform} defines a measure, it follows that \[ \sum_{b \leq a} \frac{P(b)}{p^{{a+1 \choose 2}} P(a) (1/p^2)_{\lfloor (a-b)/2 \rfloor}} = 1.\] From this recursion and the fact that $P(0) = \prod_{i \ odd} (1-1/p^i)$, one solves for $P(a)$ inductively, finding that \[ P(a) = \frac{\prod_{i=a+1}^{\infty} (1-1/p^i)}
{p^{a(a+1)/2} \prod_{i=1}^{\infty} (1-1/p^{2i})}.\] Now note that
\begin{eqnarray*}
\sum_a P(a) & = & \frac{1}{\prod_{i \geq 1} (1-1/p^{2i})} \sum_{a \geq 0} \frac{\prod_{i \geq a+1} (1-1/p^i)}{p^{{a+1 \choose 2}}} \\
& = & \frac{\prod_i (1-1/p^i)}{\prod_i (1-1/p^{2i})} \sum_{a \geq 0} \frac{1}{p^{{a+1 \choose 2}} (1-1/p) \cdots (1-1/p^a)} \\
& = & \frac{\prod_i (1-1/p^i)}{\prod_i (1-1/p^{2i})} \prod_{i} (1+1/p^i) \\
& = & 1, \end{eqnarray*} where the third equality used Identity 2.2.6 on page 19 of \cite{A}. It follows that \eqref{woodform} defines a probability measure.
\end{proof}

\end{document}